\documentclass[12pt, reqno]{amsart}
\usepackage{amsmath, amsthm, amscd, amsfonts, amssymb, graphicx, color}
\usepackage[bookmarksnumbered, colorlinks, plainpages]{hyperref}
\input{mathrsfs.sty}
\usepackage[T1]{fontenc}
\usepackage[polish, english]{babel}
\usepackage[utf8]{inputenc}

\hypersetup{colorlinks=true,linkcolor=red, anchorcolor=green,
citecolor=cyan, urlcolor=red, filecolor=magenta, pdftoolbar=true}

\newtheorem{twr}{Theorem}[section]
\newtheorem{lem}[twr]{Lemma}

\theoremstyle{definition}

\newtheorem{question}[twr]{Question}
\theoremstyle{remark}

\numberwithin{equation}{section}

\DeclareMathOperator{\id}{Id}

\DeclareMathOperator{\fix}{Fix}

\usepackage{color}
\usepackage{enumerate}

\begin{document}

\setcounter{page}{1}

\title[Normed spaces without small-norm projections]{Finite-dimensional normed spaces without small-norm hyperplane projections}
\author[T. Kobos]
{Tomasz Kobos$^1$}
\address{$^1$Faculty of Mathematics and Computer Science, Jagiellonian University, \L ojasiewicza 6, 30-348 Krak\'ow, Poland}
\email{tomasz.kobos@uj.edu.pl}

%\subjclass[2010]{47A58, 41A65, 52A21.}
%\keywords{Minimal projection; Affine dimension; Polyhedral norm; Chalmers-Metcalf operator}
%%%%%%%%%%%%%%%%%%%%%%%%%%%%%%%%%%%%%%%%%%%%%%
\begin{abstract} We prove that there exists an absolute constant $c>1$ such that for every $n \geq 3$ there exists an $n$-dimensional real normed space $X$ for which every linear projection $P$ onto a hyperplane satisfies $\|P\| \geq c$. More generally, for every integer $k \geq 1$ there exists a constant $c_k>1$ such that for every $n \geq k+2$ there exists an $n$-dimensional real normed space $X$ such that every projection $P$ onto a subspace of codimension at most $k$ satisfies $\|P\| \geq c_k$. The proof is based on an idea of Gleit and Lazar, who established an analogous result for the space of real $n \times n$ matrices equipped with either the trace-class norm or the operator norm. \end{abstract}

\maketitle
%%%%%%%%%%%%%%%%%%%%%%%%%%%

\section{Introduction}

%only real case???

Let $X$ be a real normed space and let $Y\subseteq X$ be a linear subspace. A bounded linear operator $P:X\to Y$ is called a projection onto $Y$ if $P|_Y=\id_Y$. The quantity 
$$\lambda(Y,X)=\inf\{\|P\|:\ P:X\to Y \text{ is a projection}\}$$ 
is called the \emph{relative projection constant} of $Y$ in $X$. Norm-one projections play a distinguished role in the geometry of Banach spaces. They may be regarded as a natural analogue of orthogonal projections in Euclidean spaces and for this reason have been investigated extensively. Subspaces satisfying $\lambda(Y,X)=1$ are called \emph{$1$-complemented}. The theory of $1$-complemented subspaces has been developed over several decades and constitutes a central part of the local theory of Banach spaces. A comprehensive overview of this topic may be found in the survey of Randrianantoanina \cite{1complementedsurvey}. 

A natural question is how common norm-one projections are among finite-dimensional normed spaces. While every $1$-dimensional subspace is automatically $1$-complemented by Hahn--Banach theorem, it turns out that norm-one projections onto subspaces of larger dimensions are highly exceptional. The most classical finite-dimensional normed spaces, such as the spaces $\ell_p^n$, typically possess some $1$-complemented subspaces and, in many cases, these subspaces are completely characterized. Nevertheless, already in 1941 Bohnenblust \cite{bohnenblust} proved that a sufficiently generic subspace of $\ell_p^n$ does not contain any non-trivial $1$-complemented subspaces. An even more fundamental result was obtained by Bosznay and Garay \cite{bosznaygaray} in 1986. They proved that for every $n\geq 3$ the set of $n$-dimensional real normed spaces without non-trivial $1$-complemented subspaces is open and dense in the $n$-dimensional Banach--Mazur compactum. Thus, from the generic point of view, norm-one projections are exceptional objects. Bosznay and Garay also raised the problem of obtaining precise, quantitative estimates for projection constants, asking how far the norms of projections can be bounded away from $1$.

Obtaining such quantitative estimates appears to be considerably more difficult than merely proving the nonexistence of norm-one projections. It seems that the first result in this direction was obtained in \cite{koboslp}, where a quantitative version of Bohnenblust's theorem was established for hyperplane projections in suitable subspaces of $\ell_p^n$. As a consequence, for every $n \geq 4$ there exists an $n$-dimensional real normed space $X$ such that
\begin{equation}
\label{estimate}
\lambda(Y, X) > 1 + \left (8 \left ( n + 3 \right )^{5} \right )^{-30(n+3)^2} > 1 + \exp(-C n^2 \log n),
\end{equation}
for every hyperplane $Y \subseteq X$.

Although this estimate is quantitative, the lower bound approaches $1$ very rapidly as the dimension increases. A lower bound of a comparable order was obtained in \cite{kobospolytope}, where the ambient space was assumed to have a polyhedral unit ball with many vertices lying on the unit sphere of $\mathbb R^n$. In that setting the norm of every hyperplane projection can be estimated explicitly in terms of suitable determinants associated with the vertices of the polytope and with the vectors corresponding to its facets. The minimum of these determinants may be viewed as a measure of the polytope being sufficiently generic, which in turn allows one to estimate the probability that a randomly chosen polytope yields a prescribed lower bound on the norms of hyperplane projections. However, since the construction requires an exponential number of vertices, the resulting lower bound is again necessarily very close to $1$.

More recently, the problem of obtaining uniform lower bounds for projection constants appeared in a different context. Epperlein and Wirth in \cite{epperleinwirth} related projection constants to the theory of the joint spectral radius of matrix families. They introduced the notion of a \emph{shady} norm and showed that large projection constants constitute an obstruction to certain normalization procedures for matrix sets. Subsequently, in the recent preprint \cite{epperlein}, Epperlein constructed a three-dimensional real normed space $X$ for which he established, using a computer-assisted argument, the explicit lower bound $1.01$ for the norms of all projections onto two-dimensional subspaces.

It turns out that a remarkable result in this direction predates the question posed by Bosznay and Garay in \cite{bosznaygaray} by a decade. In 1976, Gleit and Lazar \cite{gleitlazar} proved that there exists a universal constant $c>1$ such that every projection from the space $\mathcal{M}_n$ of real $n\times n$ matrices onto a hyperplane has norm at least $c$, when $\mathcal{M}_n$ is equipped either with the operator norm or with the trace-class norm. In particular, they obtained a lower bound independent of the dimension of the matrix space. Their argument combines two key ingredients. The first is an application of von Neumann's mean ergodic theorem, an idea attributed by Gleit and Lazar to Dor \cite{dor}, who used it to give a negative answer to a question of Pełczyński. The second is the rich structure of the matrix space $\mathcal{M}_n$, which contains many $1$-complemented copies of $\mathcal{M}_k$ for $k<n$.

While the result of Gleit and Lazar applies only to the matrix spaces $\mathcal{M}_n$, whose dimensions are necessarily perfect squares, it strongly suggests that a uniform lower bound by a constant independent of dimension should exist for all dimensions greater than $2$. The purpose of the present paper is to show that the underlying mechanism of the Gleit--Lazar argument indeed extends far beyond the matrix spaces in which it was originally discovered. Our main goal is to establish for every $n \geq 3$ the existence of an $n$-dimensional normed space such that the norms of all hyperplane projections are bounded below by $c$, where $c>1$ is independent of the dimension. Moreover, our approach naturally generalizes beyond hyperplanes to subspaces of arbitrary fixed codimension.

Following \cite{epperleinwirth,epperlein}, for $k\geq 1$ and $c>1$ we say that an $n$-dimensional real normed space $X$ is \emph{$c$-shady in codimension $k$} if every projection $P:X\to Y$ with $n-k \leq \dim Y \leq n-1$ satisfies $\|P\|\ge c.$ 
Our main result is the following. 

\begin{twr}
\label{thmproj}
For every integer $k \geq 1$ there exists a constant $c_k>1$ such that for every $n \geq k+2$ there exists an $n$-dimensional real normed space $X$, which is $c_k$-shady in codimension $k$.
\end{twr}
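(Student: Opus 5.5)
The plan is to run the Gleit--Lazar mechanism on a space assembled from many isometric, $1$-complemented copies of one fixed low-dimensional ``seed'' space. The seed is chosen once and for all, so the constant depends only on $k$.

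\textbf{Step 1 (seed space).} For each $k$, fix a finite $m=m(k)\geq k+2$ and an $m$-dimensional space $Z$ that is $c'$-shady in codimension $k$ for some $c'>1$. By Bosznay--Garay, a generic $Z$ has no $1$-complemented subspaces of dimension between $m-k$ and $m-1$. By compactness of the set of such projections, this upgrades to some $c'(Z)>1$. Since $m$ depends only on $k$, $c'$ depends only on $k$. This seed covers only finitely many dimensions.

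\textbf{Step 2 (ambient spaces).} For every $n\geq k+2$, build $X_n$ with the following properties.
\begin{enumerate}[(a)]
\item The norm is invariant under a group $G$ of linear isometries acting transitively on a family $\{Z_S\}$ of subspaces isometric to $Z$. For example, index the $Z_S$ by $m$-subsets $S$ of coordinates, let $G$ be the coordinate permutations, and take the unit ball to be the absolute convex hull of the copies of $B_Z$ placed on all $m$-subsets of coordinates.
\item Each $Z_S$ is $1$-complemented via a norm-one projection $R_S$ commuting with the stabiliser of $S$, here the coordinate restriction.
\item The coordinate copies jointly span $X_n$ and are ``spread out'': no fixed nonzero vector or nonzero functional lies in a $G$-invariant position compatible with a codimension-$\leq k$ splitting. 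This is where $n\geq k+2$ is used.
\end{enumerate}
For $n<m$, take a generic $X_n$ and invoke Step 1 directly; there are only finitely many such $n$.

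\textbf{Step 3 (ergodic averaging).} Suppose $P$ projects onto $Y$ of codimension $j\leq k$ with $\|P\|\leq 1+\varepsilon$, and write $Q=I-P$, of rank $j$.
\begin{enumerate}[(i)]
\item For each $S$, the compression $R_SP|_{Z_S}$ has norm $\leq 1+\varepsilon$. Let $M$ be a subspace of $Z_S$ with $\dim M \geq m-k$ on which this compression is close to the identity. Then Step 1 forces the compression to be far from any projection onto $M$.
\item This means $Q$ must be ``large'' on every copy $Z_S$ simultaneously: $R_SQ|_{Z_S}$ is bounded below in some normalised sense.
\item Following Dor, apply von Neumann's mean ergodic theorem, or simply average over the finite group, to $g^{-1}Pg$ for $g\in G$. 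The averaged operator $\bar P$ still has norm $\leq 1+\varepsilon$ and commutes with $G$.
\item By $G$-invariance, $\bar Q=I-\bar P$ is a $G$-equivariant operator. Estimate its trace: $\operatorname{tr}\bar Q=\operatorname{tr}Q=j\leq k$.
\item Conversely, the lower bound from (ii) on each $R_SQ|_{Z_S}$ averages to a lower bound on $\operatorname{tr}\bar Q$ of order $n$. Concretely, $\operatorname{tr}\bar Q\geq \delta(\varepsilon)\cdot n/m$ with $\delta>0$ independent of $n$ as $\varepsilon\to0$.
\end{enumerate}
For large $n$ this contradicts $\operatorname{tr}\bar Q\leq k$. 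The intermediate range of $n$ is handled by the finite-dimension compactness argument.

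\textbf{Main obstacle.} The hardest part is (ii) and (v): turning ``$\|P\|\leq 1+\varepsilon$'' into a lower bound on how much of $Q$ each copy $Z_S$ sees, uniformly in $n$. A low-rank $Q$ could simply avoid most copies, with $R_SQ|_{Z_S}\approx 0$. In that case $P$ restricts almost to the identity on those copies, and shadiness of $Z$ gives nothing there. The information must then come from the few copies that $Q$ does meet, via the $G$-action, rather than from trace-counting.

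So I would first try to exploit the following. If $Q=\sum_i x_i\otimes f_i$ with $\|P\|\leq1+\varepsilon$, then whenever $x_i$ has a substantial component in some $Z_S$, the compression to $Z_S$ is a near-projection of codimension $\leq k$ in $Z$. Shadiness of $Z$ rules this out. This forces the $x_i$ to be ``diffuse'' across coordinates. Diffuseness is then played against the duality $f_i(x_i)=1$ together with the $\ell_1$-type structure of the convex-hull norm, in the spirit of the Gleit--Lazar trace-class computation. Making this quantitative independently of $n$ is the step I expect to require the most care.
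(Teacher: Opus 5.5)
\textbf{There is a genuine gap, and it sits where you placed your ``main obstacle''.} Steps 3(ii) and 3(v) are never proved. As you observe yourself, there is no reason for (v) to hold: a rank-$j$ operator $Q=I-P$ can be negligible on all but a few copies $Z_S$, so averaging $P$ over $G$ and using $\operatorname{tr}\bar Q=j$ gives no contradiction.

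Step 3(i) is also unjustified. The compression $R_SP|_{Z_S}$ is the identity on $Y\cap Z_S$, but it is not a projection. Shadiness of $Z$ concerns projections only, so it gives no bound on the norm of this compression, and ``far from any projection onto $M$'' is not a norm estimate.

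Finally, the concrete example in Step 2 contradicts Step 1. Suppose $n\ge m+1$, the coordinate restrictions $R_S$ are contractive, and each $Z_T$ is isometric to $Z$. Choose $S,T$ with $|S\cap T|=m-1$. Then $R_TR_S|_{Z_T}$ is a norm-one projection of $Z_T\cong Z$ onto a coordinate hyperplane, which Step 1 forbids.

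\textbf{The two missing ideas.} First, upgrade shadiness from projections to operators: if $Z$ is $c$-shady in codimension $k$, there is $\delta>0$ such that every $T:Z\to Z$ with $\dim\fix(T)\ge\dim Z-k$ and $\|T-I\|\ge1$ satisfies $\|T\|\ge1+\delta$.
\begin{itemize}
\item By compactness, a limit of counterexamples is an operator $T$ with $\|T\|=1$, $T\neq I$ and $\dim\fix(T)\ge\dim Z-k$.
\item By the mean ergodic theorem, the Ces\`aro means of this $T$ converge to a norm-one projection onto the proper subspace $\fix(T)$, contradicting shadiness.
\end{itemize}
This, not averaging $P$ over a group, is where Dor's ergodic idea enters.

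Second, work in a space where \emph{every} nonzero vector lies in a $1$-complemented copy $V$ of a seed space. Then one copy suffices, namely one containing some nonzero $x\in\ker P$. The compression $T=R_VP|_V$ satisfies $\|T\|\le\|P\|$, fixes $Y\cap V$ (which has codimension at most $k$ in $V$), and kills $x$, so $\|T-I\|\ge1$ and hence $\|P\|\ge1+\delta$. No averaging or trace estimate is needed, and the obstacle you describe never arises.

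Finite $\ell_\infty$-sums of seed spaces have this property. For $x=(x_1,\dots,x_r)$ with $\|x\|=\|x_i\|$, let $f$ be a norming functional of $x_i$. Send $v\in V_i$ to the vector whose $i$-th entry is $v$ and whose $j$-th entry is $f(v)x_j/\|x_i\|$ for $j\neq i$. This map is an isometric embedding of $V_i$, and its image contains $x$ and is $1$-complemented.

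Using seeds of dimensions $k+2$ and $k+3$ covers every $n\ge(k+1)(k+2)$ by the Frobenius coin theorem. The finitely many remaining dimensions are handled by Bosznay--Garay and compactness, as in your Step 1.
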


We note that the assumption $n\geq k+2$ is necessary. Indeed, if $n=k+1$, then every subspace of codimension $k$ is one-dimensional and therefore $1$-complemented by the Hahn--Banach theorem.

The proof is inspired by the argument of Gleit and Lazar. The key additional ingredient turns out to be surprisingly simple: finite $\ell_\infty$-sums of shady spaces in codimension $k$ inherit the same property, with the final constant depending only on the indivdual summands. Since every sufficiently large integer can be represented as a linear combination of $k+2$ and $k+3$, and since shady spaces in codimension $k$ exist in these two dimensions, this observation allows us to construct examples in all higher dimensions.

The proof of Theorem~\ref{thmproj} is presented in Section~\ref{sectionmainthm}. We make no attempt to obtain explicit values of the constants $c_k$, although the possibility of deriving quantitative estimates is discussed in Section~\ref{sectionconcluding}.

\section{Proof of Theorem \ref{thmproj}}
\label{sectionmainthm}

We begin with an auxiliary result, which is a straightforward adaptation of Lemma~3.4 from \cite{gleitlazar}. It shows that a uniform lower bound greater than $1$ for the norms of projections automatically yields a similar estimate for a broader class of linear operators. Although the proof is very similar to that of \cite{gleitlazar}, we believe that the underlying idea, based on the mean ergodic theorem and attributed by Gleit and Lazar to Dor \cite{dor}, deserves to be highlighted. The version of the mean ergodic theorem used below may be found in Section VIII.5 of the classical monograph \cite{monograph} by Dunford and Schwartz and is mentioned also in the survey \cite{1complementedsurvey} (see Theorem~5.1 therein). Throughout this section, $I:X\to X$ denotes the identity operator on a given normed space $X$ and for a linear operator $T:X \to X$ by $\fix(T)$ we denote the linear subspace $\ker(T-I)$.

\begin{lem}
\label{lemfix}
Let $k\geq 1$ be an integer and let $X$ be an $n$-dimensional real normed space which is $c$-shady in codimension $k$ for some $c>1$. Then there exists $\delta>0$ such that for any linear operator $T: X \to X$ with $\dim \fix(T) \geq n-k$ and $\|T-I\| \geq 1$, we  have $\|T\| \geq 1 + \delta$.
\end{lem}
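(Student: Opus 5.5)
We argue by contradiction, combining a compactness argument with the mean ergodic theorem. Suppose no such $\delta>0$ exists. Then there is a sequence of linear operators $T_m: X\to X$ with $\dim \fix(T_m)\ge n-k$, $\|T_m-I\|\ge 1$ and $\|T_m\|\to 1$ (we may assume $\|T_m\|<1+1/m$). In particular the sequence is bounded, say $\|T_m\|\le 2$. Since $X$ is finite-dimensional, the space of operators on $X$ is finite-dimensional, so by compactness we may pass to a subsequence converging in norm to some operator $T$.

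We then check that the hypotheses survive the limit.
\begin{enumerate}[(i)]
\item By continuity of the norm, $\|T\|\le 1$ and $\|T-I\|\ge 1$.
\item The condition $\dim\fix(T_m)\ge n-k$ says exactly that $\operatorname{rank}(T_m-I)\le k$. The set of operators of rank at most $k$ is closed, since it is cut out by the vanishing of all $(k+1)\times(k+1)$ minors. Hence $\operatorname{rank}(T-I)\le k$, that is, $\dim\fix(T)\ge n-k$.
\item Since $\|T-I\|\ge 1$, we have $T\ne I$, so $\fix(T)\ne X$.
\end{enumerate}
Thus $Y:=\fix(T)$ satisfies $n-k\le \dim Y\le n-1$.

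Next we apply the mean ergodic theorem (Dunford--Schwartz, Section VIII.5) to the contraction $T$. Because $\|T^j\|\le 1$ for all $j$, the Ces\`aro averages
$$A_N=\frac1N\sum_{j=0}^{N-1}T^j$$
satisfy $\|A_N\|\le 1$. In finite dimensions, power boundedness gives the splitting $X=\fix(T)\oplus \overline{\operatorname{ran}(T-I)}$, and $A_N$ converges to the projection $P$ onto $\fix(T)$ along $\operatorname{ran}(T-I)$. This can also be checked directly:
\begin{itemize}
\item $A_N$ is the identity on $\fix(T)$;
\item on vectors $x=(T-I)y$ we have $A_Nx=\frac1N(T^N-I)y\to 0$;
\item the sum of these two subspaces is direct and equals $X$, by a dimension count combined with boundedness of the powers, which excludes nontrivial Jordan blocks for eigenvalue $1$.
\end{itemize}
Consequently $P$ is a linear projection onto $Y$ with $\|P\|\le \liminf_N\|A_N\|\le 1$.

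This contradicts the assumption that $X$ is $c$-shady in codimension $k$, since $Y$ has codimension between $1$ and $k$ and $c>1$. Hence some $\delta>0$ works.

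The main obstacle is the ergodic step, namely verifying that the Ces\`aro limit exists and is a projection onto $\fix(T)$ (rather than onto some other subspace). It relies on the finite-dimensional splitting $X=\fix(T)\oplus\operatorname{ran}(T-I)$ for a contraction. I would prove this by noting that a Jordan block of size at least $2$ for eigenvalue $1$ would make $\|T^j\|$ grow linearly in $j$, contradicting $\|T\|\le 1$. The closedness of the rank condition in step (ii) is routine but necessary; it is what ensures $\dim Y\ge n-k$.
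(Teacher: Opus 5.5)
Your proof is correct and follows the paper's argument: argue by contradiction, use compactness to extract a limit contraction $T\neq I$ with $\dim\fix(T)\ge n-k$, and apply the mean ergodic theorem to obtain a norm-one projection onto $\fix(T)$, contradicting shadiness. The only differences are minor. You get $\dim\fix(T)\ge n-k$ from the closedness of the set of operators of rank at most $k$, where the paper takes limits of orthonormal frames in $\fix(T_m)$. You also verify the finite-dimensional ergodic splitting directly via Jordan blocks instead of citing Dunford--Schwartz.
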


\begin{proof} Assume for contradiction that the conclusion is false. Then we can choose a sequence of linear operators $T_m: X \to X$ such that $n-k \leq \dim \fix(T_m) \leq n-1$, $\|T_m - I\| \geq 1$ and $\|T_m\| \leq 1 + \frac{1}{m}$ for every $m \geq 1$. As $\fix(T_m)$ is a non-zero subspace, we also have $\|T_m\| \geq 1$ for all $m$. Since $X$ is finite-dimensional, the set of linear operators $T: X \to X$ satisfying $\|T\| \leq 2$ is compact. Passing to a subsequence, we may therefore assume that $(T_m)$ converges in the operator norm to linear operator $T: X \to X$. As $\|T_m\|\to 1$, we have then $\|T\|=1$, and moreover $\|T-I\| \geq 1$. Since $\dim \fix(T_m) \geq n-k$ we can choose $n-k$ unit vectors $u_1^{(m)}, \ldots u_{n-k}^{(m)} \in \fix(T_m)$, which are pairwise orthogonal with respect to a fixed inner product on $X$. Passing to a further subsequence if necessary, we may assume that $u_j^{(m)} \to u_j$ for $j=1, \ldots, n-k$, where $u_1, \ldots, u_{n-k}$ are still linearly independent. Since $T_m(u_j^{(m)})=u_j^{(m)}$, we obtain
$$T(u_j) = \lim_{m \to \infty} T_m(u_j^{(m)}) = \lim_{m \to \infty} u_j^{(m)} = u_j,$$
so that $u_j \in \fix(T)$ for $j=1, \ldots, n-k$. It follows that $\dim \fix(T) \geq n-k$. Since $\|T\|=1$, we have $\|T^j\|\leq 1$ for every $j\geq 0$. Hence the sequence $(S_N)_{N=1}^{\infty}$ of Ces\`aro means 
$$S_N=\frac{1}{N} \sum_{k=0}^{N-1} T^k$$ 
satisfies $\|S_N\|\leq 1$ for every $N$. As $X$ is finite-dimensional, the mean ergodic theorem yields the convergence of $S_N$ in the operator norm to a linear projection $P:X \to \fix(T)$. Moreover,  $\|P\|\leq 1$ and since every non-zero projection has norm at least $1$, we get $\|P\|=1$. If $\dim\fix(T)<n$, then $P$ is a norm-one projection onto a subspace of codimension at most $k$, contradicting the fact that $X$ is $c$-shady in codimension $k$. Consequently, we must have $\dim \fix(T)=n$ and thus $T=I$. This contradicts the inequality $\|T-I\| \geq 1$ and concludes the proof.
\end{proof}

The crucial property of the matrix space $\mathcal{M}_n$ exploited by Gleit and Lazar is the abundance of $1$-complemented copies of $\mathcal{M}_3$. The following lemma extracts the essential feature of this construction and shows that the argument can be carried out in a much more general setting.

\begin{lem}
\label{lemshady}
Let $k\geq 1$ be an integer and let $X$ be an $n$-dimensional real normed space, where $n \geq k+2$. Let $V_1, \ldots, V_m$ be finite-dimensional real normed spaces such that  every $i=1, \ldots, m$ we have $\dim V_i \geq k+2$ and the space $V_i$ is $c_i$-shady in codimension $k$ for some $c_i>1$. Assume that for every non-zero $x \in X$ there exists a $1$-complemented subspace $V \subseteq X$ containing $x$ and isometric to $V_i$ for some $i\in\{1,\ldots,m\}$. Then $X$ is $c$-shady in codimension $k$ for some constant $c>1$ depending only on the spaces $V_1,\ldots,V_m$.
\end{lem}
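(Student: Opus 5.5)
The plan is to reduce a projection on $X$ to an operator on one of the model spaces $V_i$ by compressing it with a norm-one projection, and then to apply Lemma~\ref{lemfix} inside that $V_i$.

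For each $i$, apply Lemma~\ref{lemfix} to $V_i$, which is $c_i$-shady in codimension $k$. This gives $\delta_i>0$ such that every linear $T:V_i\to V_i$ with $\dim\fix(T)\ge \dim V_i-k$ and $\|T-I\|\ge 1$ satisfies $\|T\|\ge 1+\delta_i$. I will set $c=1+\min_i \delta_i$; this depends only on $V_1,\ldots,V_m$.

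Now let $P:X\to Y$ be a projection with $n-k\le \dim Y\le n-1$. Since $Y\ne X$, the kernel of $P$ is nonzero, so pick $0\ne x\in\ker P$. By hypothesis there is a subspace $V\subseteq X$ containing $x$, isometric to some $V_i$, and a projection $Q:X\to V$ with $\|Q\|=1$. Define
$$T=QP|_V:V\to V.$$
I will check the three conditions needed by Lemma~\ref{lemfix}:
\begin{enumerate}
\item[(a)] $\|T\|\le \|Q\|\,\|P\|=\|P\|$.
\item[(b)] $T x=QPx=0$, so $(T-I)x=-x$. Hence $\|T-I\|\ge 1$.
\item[(c)] Every $y\in Y\cap V$ satisfies $Py=y$ and $Qy=y$, so $Y\cap V\subseteq \fix(T)$. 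Since $Y$ has codimension at most $k$ in $X$, the intersection $Y\cap V$ has codimension at most $k$ in $V$. Therefore $\dim\fix(T)\ge \dim V-k$.
\end{enumerate}

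Transport $T$ to $V_i$ through the isometry; this preserves operator norms and the dimensions of fixed spaces. Lemma~\ref{lemfix} then gives $\|T\|\ge 1+\delta_i$, and so
$$\|P\|\ge\|T\|\ge 1+\delta_i\ge c.$$

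The only step that needs care is the choice of the vector $x$ fed into the hypothesis. Choosing $x\in\ker P$ is what makes $\|T-I\|\ge 1$ automatic, while $Y\cap V$ supplies enough fixed vectors to meet the dimension requirement of Lemma~\ref{lemfix}. The assumption $n\ge k+2$ is not needed in this argument; it only ensures that the statement is meaningful.
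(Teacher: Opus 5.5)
Your proposal is correct and matches the paper's proof essentially step for step. The paper also picks $0\neq x\in\ker P$, compresses to $T=U^{-1}QPU$ (your $QP|_V$ transported to $V_i$), obtains $\|T-I\|\ge 1$ from $Tx=0$ and $\dim\fix(T)\ge\dim V-k$ from $Y\cap V$, and concludes with $c=1+\min_i\delta_i$ via Lemma~\ref{lemfix}.
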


\begin{proof}
Let $P:X \to Y$ be a linear projection with $n-k \leq \dim Y \leq n-1$ and let $x \in \ker P$ be non-zero. Choose a subspace $V$ such that $x \in V$ and $V$ is isometric to $V_i$ for some $i \in \{1, \ldots, m\}$. Let $Q: X \to V$ be a projection of norm $1$. Let $U: V_i \to V$ be an isometry and define $T: V_i \to V_i$ as $T = U^{-1}QPU$. Then 
$$\|T\| \leq \|U^{-1}\| \cdot \|Q\| \cdot \|P \| \cdot \|U\| = \|P\|.$$
Moreover, 
$$(T-I)(U^{-1}(x))=U^{-1}QP(x) - U^{-1}(x) = -U^{-1}(x)$$
and hence $\|T-I\| \geq 1$. We now claim that $U^{-1}(Y \cap V) \subseteq \fix(T)$. Indeed, let us take $v \in U^{-1}(Y \cap V)$. Then $U(v) \in Y \cap V$, so that $P(U(v)) = U(v)$ and $Q(U(v))=U(v)$. Therefore
$$T(v) = U^{-1}QPU(v)=U^{-1}Q(U(v))=U^{-1}(U(v))=v.$$
Hence, we conclude that 
$$\dim \fix(T) \geq \dim Y \cap V \geq \dim Y + \dim V - \dim X \geq n-k + \dim V - n = \dim V - k.$$
Lemma~\ref{lemfix} applied to the space $V_i$ now yields
$$ \|T\| \geq 1+\delta_i,$$ 
where $\delta_i>0$ depends only on $V_i$. Consequently, 
$$\|P\|\geq \|T\|\geq 1+\delta_i.$$
It follows that $X$ is $c$-shady in codimension $k$ for $c=1+\min\{ \delta_1, \ldots, \delta_m\}>1$.

\end{proof}

Since the key property responsible for the conclusion of Lemma~\ref{lemshady} has already been isolated, it remains to construct spaces satisfying its assumptions. It turns out that such spaces arise naturally from finite $\ell_\infty$-sums. Recall that if $V_1,\ldots,V_m$ are finite-dimensional normed spaces, then their $\ell_\infty$-sum is the normed space 
$$X=V_1\oplus_{\infty} \ldots \oplus_{\infty} V_m = (V_1\times \ldots \times V_m,\|\cdot\|),$$ 
where
$$\|(x_1,\ldots,x_m)\| = \max\{\|x_i\|_{V_i}: \ i=1,\ldots,m\}$$
for $(x_1,\ldots,x_m)\in X$. The following observation is the main new ingredient of the proof. It shows that every non-zero vector in a finite $\ell_\infty$-sum belongs to a $1$-complemented subspace isometric to one of the summands.

\begin{lem}
Let $V_1, \ldots, V_m$ be finite-dimensional real normed spaces and let
$$X = V_1 \oplus_{\infty} \ldots \oplus_{\infty} V_m.$$
Then for every non-zero $x \in X$ there exists a $1$-complemented subspace $V \subseteq X$ such that $x \in V$ and $V$ is isometric to $V_i$ for some $i \in \{1, \ldots, m\}$.
\end{lem}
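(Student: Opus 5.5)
Write $x=(x_1,\ldots,x_m)\neq 0$ and choose an index $i$ with $\|x_i\|_{V_i}=\|x\|$, i.e. a coordinate at which the maximum defining the norm is attained. Since $x\neq 0$, we have $x_i\neq 0$. The plan is to embed $V_i$ isometrically into $X$ through a map that uses linear functionals, chosen by Hahn--Banach, to generate the other coordinates. For each $j\neq i$ we need a linear map $A_j:V_i\to V_j$ with $\|A_j\|\leq 1$ and $A_j(x_i)=x_j$. Take a norming functional $f\in V_i^*$ with $\|f\|=1$ and $f(x_i)=\|x_i\|$, and put
$$A_j(v)=\frac{f(v)}{\|x_i\|}\,x_j .$$
Then $\|A_j(v)\|\leq |f(v)|\,\|x_j\|/\|x_i\|\leq \|v\|$, because $\|x_j\|\leq\|x_i\|$. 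This inequality is exactly why we chose the coordinate attaining the maximum.

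Define $J:V_i\to X$ by letting the $i$-th coordinate of $J(v)$ be $v$ and the $j$-th coordinate be $A_j(v)$ for $j\neq i$. Then
$$\|J(v)\|=\max\bigl(\|v\|,\max_{j\neq i}\|A_j v\|\bigr)=\|v\|,$$
so $J$ is an isometry. Moreover $J(x_i)=x$, since $A_j(x_i)=x_j$. Set $V=J(V_i)$. Then $V$ is isometric to $V_i$ and contains $x$.

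Finally, define $Q:X\to V$ by $Q(y)=J(y_i)$. For $y=J(v)\in V$ we have $y_i=v$, so $Q(y)=y$ and $Q$ is a projection onto $V$. Its norm satisfies $\|Q(y)\|=\|y_i\|\leq\|y\|$, so $\|Q\|\leq1$, and therefore $\|Q\|=1$ because $V\neq 0$. Hence $V$ is $1$-complemented.

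There is no real obstacle. The only delicate point is picking $i$ at a coordinate where the norm is attained, which makes each $A_j$ a contraction. Everything else is routine verification.
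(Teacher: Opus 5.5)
Your proof is correct and follows the paper's argument essentially verbatim. It uses the same choice of a coordinate attaining the maximum and the same rank-one maps $v\mapsto \frac{f(v)}{\|x_i\|}x_j$ built from a norming functional, and your projection $y\mapsto J(y_i)$ is exactly the paper's composition of the isometric embedding with the canonical coordinate projection.
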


\begin{proof}
For a fixed non-zero vector $x=(x_1, \ldots, x_m) \in X$ choose an index $i \in \{1, \ldots, m\}$ such that $\|x\| = \|x_i\|_{V_i}$. Let $f \in V_i^*$ be a linear functional satisfying $\|f\|=1$ and $f(x_i)=\|x_i\|_{V_i}$. For $j \in \{1, \ldots, m\}$, $j \neq i$ we define a mapping $U_j: V_i \to V_j$ as 
$$U_j(v) = \frac{f(v)}{\|x_i\|_{V_i}}x_j \qquad \text { for } v \in V_i.$$ Moreover, let also $U_i:V_i \to V_i$ be the identity on $V_i$. We further define a linear operator $T: V_i \to X$ as
$$T(v) = (U_1(v), \ldots, U_m(v)),$$
for $v \in V_i$. Since $\|x_j\|_{V_j} \leq \|x_i\|_{V_i}$ for any $j$, we have $\|T\| \leq 1$. The opposite inequality follows from the fact that $\|T(v)\| \geq \|U_i(v)\|=\|v\|$ for any $v \in V_i$. Thus, $T$ is an isometry and moreover $T(x_i)=x$. Hence, $V:=T(V_i) \subseteq X$ is a subspace isometric to $V_i$ and containing $x$. It remains to show that $V$ is $1$-complemented in $X$. Clearly, the canonical projection $Q: X \to V_i$ is of norm $1$. Moreover, for $v \in V_i$ we have $QT(v)=U_i(v)=v$. In particular, if we now define $P=TQ: X \to V$, then $P$ is a projection onto $V$, since $P(T(v))=TQT(v)=T(v)$ for $v \in V_i$. Furthermore, we have $\|P\| \leq \|T\| \cdot \|Q\| = 1$, so $\|P\|=1$ and the conclusion follows.
\end{proof}

We are ready to conclude the proof of the main theorem.

\begin{proof}[Proof of Theorem \ref{thmproj}]
For every $i \in \{k+2, k+3\}$, there exists a space $V_i$ which is $c_i$-shady in codimension $k$ for some $c_i > 1$. Since the numbers $k+2$ and $k+3$ are relatively prime, the Frobenius coin theorem implies that every integer $n \geq (k+1)(k+2)$ can be written in the form $a(k+2) + b(k+3)$ for some integers $a, b \geq 0$. For such $n$, let us take 
$$X= \underbrace{V_{k+2}\oplus_\infty\cdots\oplus_\infty V_{k+2}}_{a\text{ times}} \oplus_\infty \underbrace{V_{k+3}\oplus_\infty\cdots\oplus_\infty V_{k+3}}_{b\text{ times}}.$$ 
By the previous lemma and Lemma~\ref{lemshady}, the space $X$ is $c$-shady in codimension $k$ for some constant $c>1$ depending only on $V_{k+2}$ and $V_{k+3}$. On the other hand, for every integer $ k+2\leq n \leq (k+1)(k+2)-1$ there exists an $n$-dimensional real normed space which is $c(n)$-shady in codimension $k$ for some $c(n)>1$. Thus, for every $n \geq k+2$ there exists a $C$-shady space in codimension $k$, where 
$$C = \min \{c, c(k+2), \ldots, c((k+1)(k+2)-1)\}>1.$$
\end{proof}

\section{Concluding remarks}
\label{sectionconcluding}

We conclude the paper with a discussion of the possibility of certain natural extensions of our main result. We have shown that for every integer $k\geq 1$ there exists a constant $c_k>1$ such that for every $n\geq k+2$ there exists an $n$-dimensional normed space which is $c_k$-shady in codimension $k$. However, our argument does not provide explicit values of the constants $c_k$. It seems likely that obtaining explicit estimates would require substantially new ideas.

The hyperplane case appears to be the most accessible for obtaining such estimates. Explicit lower bounds for the shadiness constant in codimension $1$ are already available in fixed dimensions. For $n\geq 4$, such bounds follow from \eqref{estimate}, while for $n=3$ an explicit lower bound of $1.01$ was recently obtained by Epperlein \cite{epperlein}. Nevertheless, these estimates do not lead directly to an explicit version of Theorem~\ref{thmproj}. The main obstacle is Lemma~\ref{lemfix}, whose proof is based on a compactness argument and the mean ergodic theorem. Although the lemma yields the existence of a positive constant $\delta$, it does not provide a quantitative relationship between $\delta$ and the corresponding shadiness constant $c$. On the other hand, since every integer $n \geq 6$ can be written in the form $3a+4b$ for some integers $a, b \geq 0$, the case $k=1$ of the final construction uses only two model spaces $V_3$ and $V_4$. Consequently, an explicit version of Theorem~\ref{thmproj} in the hyperplane case would essentially reduce to establishing a quantitative variant of Lemma~\ref{lemfix}, but only for the particular spaces $V_3$ and $V_4$. From this perspective, the problem may be viewed as extending the known lower bounds for norms of hyperplane projections to a broader class of linear operators with large fixed-point spaces. Although the construction does not cover dimension $5$, this causes no difficulty, since an explicit lower bound is already available in that dimension via \eqref{estimate}. Taking the minimum of the resulting constants would then yield a uniform estimate in every dimension. For higher codimensions, explicit lower bounds greater than $1$ are not known even in fixed dimensions.

Finally, it is natural to ask whether uniform lower bounds can be also obtained for projections onto subspaces of other dimensions. The case of subspaces whose dimensions lie in the range $[\alpha n,\beta n]$, where $\alpha,\beta>0$ are fixed constants, has been extensively studied; see \cite{gluskin,latala,szarek}. In these works, variants of Gluskin's random construction were used to obtain lower bounds of the order $C_{\alpha,\beta}\sqrt n$ for the norms of projections. To the best of our knowledge, however, such methods have not been successfully applied outside this ``middle'' range of dimensions. In particular, we are not aware of any dimension-independent lower bounds for projections onto subspaces of fixed dimension, for instance onto two-dimensional subspaces. It does not seem likely that the method developed in the present paper can be adapted to this setting. In fact, the construction relies crucially on the existence of non-trivial $1$-complemented subspaces, which are used as building blocks in the final $\ell_\infty$-sum argument. Consequently, the method cannot yield a uniform lower bound for projections onto subspaces of \emph{all} dimensions simultaneously. This leads to the following natural question. 
\begin{question} Does there exist a constant $c>1$ such that for every $n\geq 3$ there exists an $n$-dimensional real normed space $X$ satisfying $\|P\|\geq c$ for every projection $P:X\to Y$ with $ 2\leq \dim Y\leq n-1$? 
\end{question} 
A positive answer would almost certainly require ideas that are different from those used in the present paper. On the other hand, it would also be of considerable interest to obtain results in the opposite direction, namely general estimates guaranteeing the existence of a projection of relatively small norm in an arbitrary finite-dimensional normed space. At present, very little seems to be known in this direction.

\section*{Acknowledgements} 

The author would like to thank Microsoft Copilot for its assistance during the preparation of this manuscript, in particular for discussions concerning both mathematical and editorial aspects of the paper. At the same time, it should be emphasized that the essential mathematical ideas underlying the present work were not generated by artificial intelligence. The principal inspiration comes from the paper of Gleit and Lazar \cite{gleitlazar}, as well as from the earlier work of Dor \cite{dor} on the use of the mean ergodic theorem in the study of projections. The author was unaware of the paper of Gleit and Lazar for many years, including during the preparation of the earlier papers \cite{koboslp,kobospolytope} on closely related topics. The paper was discovered only recently and largely by accident. The present work may be viewed as an attempt to isolate and highlight some of the key ideas contained in that remarkable paper, which appears to have received considerably less attention than it deserves.

\end{document}